\date{}
\theoremstyle{plain}
\newtheorem{theorem}{Theorem}
\newtheorem{corollary}[theorem]{Corollary}
\newtheorem{obs}[theorem]{Observation}
\newtheorem{lemma}[theorem]{Lemma}
\newtheorem{rem}[theorem]{Remark}
\newtheorem{question}[theorem]{Question}
\newenvironment{Rem}{\begin{rem}\rm}{\end{rem}}
\theoremstyle{definition}
\theoremstyle{remark}
\def\epsilon{\varepsilon}
\def\C{{\mathbb C}}
\def\N{{\mathbb N}}
\def\R{{\mathbb R}}
\def\P{{\mathbb P}}
\title[Asymptotics of the smooth $A_n$-realization problem]{Asymptotics of the smooth $A_n$-realization problem}
\author{Sebastian Baader, Peter Feller}
\address{Universit\"at Bern, Sidlerstrasse 5, 3012 Bern, Switzerland}
\email{sebastian.baader@unibe.ch}
\address{ETH Z\"urich, R\"amistrasse 101, 8092 Z\"urich, Switzerland}
\email{peter.feller@math.ch}
\subjclass[2020]{57K10, 14B05, 14B07}
\begin{document}
\begin{abstract} We solve an asymptotic variant of a smooth version of the $A_n$-realization problem for plane curves.
As an application, we determine the cobordism distance between torus links of type $T(d,d)$ and $T(2,N)$ up to an error of at most $3d$.
We also discuss the limits of knot theoretic approaches aimed at solving the $A_n$-realization problem.
\end{abstract}

%\dedicatory{}

\maketitle

\section{Introduction}

The algebraic $A_n$-realization problem asks for the minimal degree~$d(n)$ of a %square-free
polynomial $f(x,y) \in \C[x,y]$ that has an isolated singularity of type $A_n$ at the origin~\cite{GreuelLossenShustin_89_PlaneCurveswithPrescribedSing,GS08}.
The minimal degree~$d(n)$ is known to satisfy
\begin{equation}\label{eq:knownasymptotic}
\frac{7}{12} \leq \liminf_{n\to\infty}\frac{n}{d(n)^2}\leq\limsup_{n\to\infty}\frac{n}{d(n)^2} \leq \frac{3}{4}.\end{equation}
The lower bound is due to Orevkov by a concrete construction, while the upper bound results from an analysis of the signature spectrum~\cite{Orevkov_12_SomeExamples}.
 
Let $f(x,y) \in \C[x,y]$ be a square-free polynomial of degree $d$ that has an isolated singularity of type $A_n$ at the origin. In knot theoretic terms this means that, for sufficiently small $\epsilon>0$, the intersection of the algebraic curve $f^{-1}(0) \subset \C^2$ with the sphere $S^3_\epsilon\subset \C^2$ of radius $\epsilon$ around the origin is a torus link of type $T(2,n+1)$. Fixing a sufficiently small $\epsilon>0$ and adding a generic polynomial of degree $d$ with small coefficients to $f(x,y)$, we obtain a polynomial $\widetilde{f}(x,y)$ of degree~$d$ such that $\widetilde{f}^{-1}(0) \subset \C^2$ is smooth, $\widetilde{f}^{-1}(0) \cap S^3_\epsilon$ is still a torus link of type $T(2,n+1)$, and in addition the link at infinity---$\widetilde{f}^{-1}(0) \cap \partial S^3_R$ for large $R>0$---is a torus link of type $T(d,d)$; see Remark~\ref{rem:T(d,d)atinfty}.
Taking into account the classic genus-degree formula for smooth algebraic curves, one finds that the curve $\widetilde{f}^{-1}(0)$ provides a smooth connected cobordism of Euler characteristic around $n-d^2$ between the two links $T(d,d)$ and $T(2,n+1)$. In particular, if the upper bound of $\frac{3}{4}$ for the ratio $\frac{n}{d^2}$ were achieved, we would obtain a smooth cobordism of Euler characteristic around $n-d^2=-\frac{1}{4}d^2$ between the links $T(d,d)$ and $T(2,n)$. The purpose of this note is to prove that asymptotically such a smooth cobordism actually exists.

\begin{theorem}
\label{threequarters}
The maximal Euler characteristic $\chi(d)$ among all smooth connected cobordisms between the links $T(d,d)$ and $T(2,\left\lfloor\frac{3}{4}d^2\right\rfloor)$ satisfies
\[\lim_{d \to \infty} \frac{\chi(d)}{d^2}=-\frac{1}{4}.\]
\end{theorem}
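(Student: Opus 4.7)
The plan is to establish matching upper and lower bounds on $\chi(d)/d^2$, both equal to $-\tfrac{1}{4}$.

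For the \emph{upper bound} $\limsup_{d\to\infty}\chi(d)/d^2 \le -1/4$, I would apply the slice genus of $T(d,d)$. Given any smooth connected cobordism $F$ between $T(d,d)$ and $T(2, \lfloor 3d^2/4 \rfloor)$, I cap off the $T(2, \lfloor 3d^2/4 \rfloor)$-end of $F$ with its Bennequin surface (Euler characteristic $2 - \lfloor 3d^2/4 \rfloor$). The result is a smooth surface in the $4$-ball bounded by $T(d,d)$, whose Euler characteristic is at most $2d - d^2$ by Kronheimer--Mrowka's proof of the Milnor conjecture applied to the positive braid $T(d,d)$. This gives
\[
\chi(F) \le 2d - d^2 - \left(2 - \lfloor 3d^2/4 \rfloor\right) = -d^2/4 + O(d),
\]
hence $\limsup_{d\to\infty}\chi(d)/d^2 \le -1/4$.

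For the \emph{lower bound} $\liminf_{d\to\infty}\chi(d)/d^2 \ge -1/4$, I need to exhibit smooth connected cobordisms $F_d$ with $\chi(F_d) \ge -d^2/4 - O(d)$. Following the strategy outlined in the introduction, I would construct, for each large $d$, a smooth (not necessarily algebraic) surface $\Sigma_d \subset \C\P^2$ of degree $d$ with a single $A_n$-singularity at the origin for $n = \lfloor 3d^2/4 \rfloor - 1$, smooth elsewhere, and whose smoothing achieves the minimum genus $(d-1)(d-2)/2$. The restriction of $\Sigma_d$ to the annular region between a small ball around the origin and a large ball in $\C^2$ is then a smooth cobordism from $T(2, \lfloor 3d^2/4 \rfloor)$ to $T(d,d)$ of Euler characteristic exactly $n - d^2 + 2d - 1 = -d^2/4 + O(d)$, realising the desired bound.

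The hard part will be constructing such a smooth surface $\Sigma_d$ realising an $A_n$-singularity with $n$ approaching the signature bound $3d^2/4$; algebraically, Orevkov only achieves $n \sim 7d^2/12$, so a genuinely smooth, non-algebraic argument is needed. The topology is consistent, since the Kronheimer--Mrowka minimum-genus constraint on the smoothing permits $n$ up to essentially $(d-1)(d-2)$, well above $3d^2/4$. My strategy would be to start from an algebraic model---for instance Orevkov's curve, or a curve with many nodal singularities clustered near the origin---and perform smooth deformations in $\C\P^2$ that raise the $A$-index near the origin, while monitoring the degree and the minimum-genus property of the smoothing. Verifying that such smooth deformations can actually be carried out up to $n \sim 3d^2/4$---exactly saturating the signature bound---is the technical heart of the argument and is the content of the asymptotic smooth $A_n$-realisation result alluded to in the title.
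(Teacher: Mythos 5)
Your upper bound is fine and, while phrased differently from the paper, it is morally equivalent: you glue the Bennequin surface of $T(2,\lfloor 3d^2/4\rfloor)$ to a cobordism and apply the local Thom conjecture, obtaining $\chi(d)\le -\lceil d^2/4\rceil +2d-2$; the paper instead applies Murasugi's signature bound directly to the cobordism and gets $\chi(d)\le -\lfloor d^2/4\rfloor+1$. Both are $-d^2/4 + O(d)$, which is all the asymptotics need.

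The lower bound, however, is not a proof but a wish list, and the wish is for something strictly harder than the theorem. You propose to build, for each large $d$, a smooth degree-$d$ surface in $\C\P^2$ with a \emph{single} $A_n$-cone singularity for $n\approx\frac{3}{4}d^2$, whose smoothing has minimal genus $(d-1)(d-2)/2$; restricting it to the annular region gives a cobordism of Euler characteristic $n-(d-1)^2\approx -d^2/4$. But the existence of such a surface is precisely the smooth $A_n$-realization problem, which is open: it is Question~\ref{q:assmoothAn} in the paper, and Appendix~\ref{sec:smoothAnrelproblem} explicitly records that the cobordisms used to prove Theorem~\ref{threequarters} have Euler characteristic \emph{strictly less} than $\lfloor 3d^2/4\rfloor-(d-1)^2$, so that the theorem does \emph{not} yield the surface you want. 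You acknowledge the gap yourself (``the technical heart\ldots is the content of the asymptotic smooth $A_n$-realisation result alluded to in the title''), but the title refers to an asymptotic statement about cobordisms, not to the construction you describe, which remains conjectural.

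The missing idea is to use algebraic curves with \emph{many small} $A$-singularities rather than one enormous one. Hirano constructed, for each $m\ge 2$, an irreducible degree-$2m$ plane curve with $3m$ singularities of type $A_{m-1}$. Removing a generic line at infinity and small balls around the singular points, and then tubing those balls together, yields a cobordism from $T(2m,2m)$ to the connected sum of $3m$ copies of $T(2,m)$; composing with $3m-1$ saddles gives a cobordism to $T(2,3m^2)$. Bookkeeping gives $\chi=-m^2-2m=-d^2/4-d$ with $d=2m$ and $3m^2=\frac34 d^2$, exactly the $-d^2/4+O(d)$ lower bound required, sidestepping the unsolved single-singularity realization problem entirely. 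As written, your argument proves the $\limsup$ inequality but leaves the $\liminf$ inequality, and hence the theorem, unproven.
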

Unlike the derivation of previous results on the cobordism distance between torus links~\cite{Baader_ScissorEq,BaaderFellerLewarkZentner_19,FP}, the proof of Theorem~\ref{threequarters} is not based on combinatorial braid group considerations. Instead, the main input here are examples of algebraic curves with a large number of $A_m$-singularities described by Hirano~\cite{Hirano_92}; see Section~\ref{sec:main} for the exact statement of Hirano and how it implies Theorem~\ref{threequarters}. It is surprising to the authors that algebraic curves considered over 30 years ago allow to build cobordisms that make all the observations in this note possible.

In Section~\ref{sec:asymdist}, as an application of Theorem~\ref{threequarters} and its proof%in smooth concordance theory
, we find that the link $T(2,\lfloor\frac{3}{4}d^2\rfloor)$ is essentially the nearest link to $T(d,d)$ among all torus links of type $T(2,N)$, in terms of the smooth cobordism distance, as defined in~\cite{Baader_ScissorEq}. % (see also~\cite{L} for a projective version of this distance). %; see Corollary~\ref{cor:cobdist}.
Here is the precise result.

\begin{theorem}\label{prop:cobdist}
For all non-zero integers $d$ and all integers $N$, the maximal Euler characteristic $\chi(d,N)$ among all smooth connected cobordisms between the links $T(d,d)$ and $T(2,N)$ has value around
$-\frac{1}{4}d^2-\bigm|\mathrm{N-sign}(d)\frac{3}{4}d^2\bigm|$.
More precisely, %the following set has a unique accumulation point at~1.
%$$\left\{\frac{|\chi(d,N)|}{\frac{1}{4}d^2+|N-\frac{3}{4}d^2|} \middle| d,N \in \N \right\}.$$
\[-\frac{d^2}{4}-\left|\mathrm{sign}(d)\frac{3}{4}d^2-N\right|-4|d|
\leq\chi(d,N)\leq -\frac{d^2}{4}-\left|\mathrm{sign}(d)\frac{3}{4}d^2-N\right|+2|d|.\]
\end{theorem}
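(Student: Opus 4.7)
Assume $d > 0$; the case $d < 0$ will reduce to this via the mirror involution, which yields $\chi(d, N) = \chi(|d|, -N)$. I plan to prove the lower and upper bounds separately.

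For the lower bound, I begin with the cobordism $\Sigma_0$ from $T(d,d)$ to $T(2, \lfloor 3d^2/4 \rfloor)$ of Euler characteristic at least $-d^2/4 - O(d)$ supplied by the constructive proof of Theorem \ref{threequarters} via Hirano's algebraic curves; careful bookkeeping of constants there will yield an explicit linear error term. To reach $T(2, N)$ from $T(2, \lfloor 3d^2/4 \rfloor)$, I concatenate $|N - \lfloor 3d^2/4 \rfloor|$ elementary saddle cobordisms between consecutive torus links $T(2, M)$ and $T(2, M \pm 1)$, each of Euler characteristic $-1$; arranging each saddle to either merge two components or introduce a twist preserves connectedness. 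After absorbing constants, this yields $\chi(d, N) \geq -d^2/4 - |3d^2/4 - N| - 4d$.

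For the upper bound, given any smooth connected cobordism $\Sigma$ from $T(d,d)$ to $T(2, N)$ of Euler characteristic $\chi$, I combine three obstructions, each sharp in a different range of $N$. When $N \geq 3d^2/4$, the classical signature inequality applied with $\sigma(T(2, N)) = -N + O(1)$ and the standard asymptotic $\sigma(T(d, d)) = -d^2/2 + O(d)$ gives $\chi \leq d^2/2 - N + O(d)$. When $0 \leq N \leq 3d^2/4$, I cap the $T(2, N)$-end of $\Sigma$ with a genus-minimizing Seifert surface of $T(2, N)$ (of Euler characteristic $2 - N$); the resulting connected surface in $D^4$ bounds $T(d, d)$, so the 4-genus bound $g_4(T(d,d)) = (d-1)(d-2)/2$ yields $\chi + 2 - N \leq -d^2 + 2d$, i.e.\ $\chi \leq -d^2 + N + 2d - 2$. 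When $N \leq 0$, I invoke an extension to links of a concordance invariant such as the Ozsv\'ath-Szab\'o $\tau$-invariant or the Rasmussen $s$-invariant (as developed by Cavallo and by Beliakova-Wehrli); its values on the positive torus link $T(d, d)$ and on the negative torus link $T(2, N)$ translate, via the bound $g(\Sigma) \geq |\tau_L(T(d,d)) - \tau_L(T(2, N))|$ combined with $\chi = 2 - 2g - c$, into $\chi \leq -d^2 - |N| + 2d$. A direct case check then shows these three bounds together imply $\chi \leq -d^2/4 - |3d^2/4 - N| + 2d$ uniformly in $N$.

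The main obstacle I anticipate is the $N \leq 0$ range of the upper bound: identifying a link concordance invariant whose values on both $T(d, d)$ and $T(2, N)$ match the required asymptotics, with a sufficiently sharp $+2d$ error, will require care (the naive signature-only bound falls short by roughly $d^2/2$, and the naive 4-genus capping with a Seifert surface of $T(2, N)$ also falls short for $N < 0$). A secondary technical point is the precise tracking of linear error terms in the Hirano-based construction behind Theorem \ref{threequarters}, needed to convert its asymptotic conclusion into the explicit $-4d$ term claimed in the lower bound.
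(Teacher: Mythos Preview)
Your lower bound and your upper bound for $N\geq 0$ are essentially the paper's argument. The paper also composes the Hirano cobordism with saddle moves for the lower bound, and for the upper bound uses the signature when $N\geq \tfrac{3}{4}d^2$ and a triangle-inequality/capping argument when $0\leq N<\tfrac{3}{4}d^2$. One remark: in your middle range you invoke ``$g_4(T(d,d))=(d-1)(d-2)/2$'' as if it were elementary, but the fact that no connected surface in $D^4$ bounding $T(d,d)$ has larger Euler characteristic than $-d^2+2d$ is exactly the local Thom conjecture (Kronheimer--Mrowka), which the paper cites explicitly at this step.

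Where you diverge is the case $N\leq -1$, and here the paper's route is both simpler and sharper than what you propose. You are right that capping with a Seifert surface falls short by $2|N|$, but you do not need Cavallo's or Beliakova--Wehrli's link invariants either. The paper observes that for $d>0$ and $N<0$ the local Thom conjecture gives an \emph{equality}
\[
\chi(d,N)=\chi(d,1)+\chi(1,N)=-(d-1)^2+N+1,
\]
because a cobordism from $T(d,d)$ to $T(2,N)$, viewed in $D^4$, bounds the split union $T(d,d)\sqcup T(2,|N|)$ of two \emph{positive} torus links, and Kronheimer--Mrowka's result forces additivity of the minimal genus there. This single line delivers both inequalities at once (indeed the upper bound is exact in this range), so the obstacle you anticipated dissolves.
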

%In this text, $f(d)\leq g(d)+O(d)$ and $f(d)+O(d)\leq g(d)$ are understood to mean that there are constants $A$ and $B$ such that $f(d)\leq g(d)+A|d|+B$ and $f(d)-A|d|-B\leq g(d)$, respectively, for all integers $d$.
%In fact, Corollary~\ref{cor:cobdist}, makes Theorem~\ref{threequarters} more precise.

For context, we note that, of course, many values of $\chi(d,N)$ are known exactly. For example, in case $d>0>N$, the upper bound is an equality by the resolution of the local Thom conjecture~\cite{KronheimerMrowka_Gaugetheoryforemb}; see last paragraph of the proof of Theorem~\ref{prop:cobdist}. The point is that for many choices of $d$ (in particular for $d\geq 10$ and $N\geq \frac{3d^2}{4}$), the exact value of $\chi(d,N)$ remains unknown and Theorem~\ref{prop:cobdist} constitutes the first time $\chi(d,N)$ is determined up to an error that is linear in $|d|$. 
We also note that Theorem~\ref{prop:cobdist} not only recovers Theorem~\ref{threequarters} (by setting $N=\lfloor \frac{3d^2}{4}\rfloor$), but makes precise the rate of convergence in Theorem~\ref{threequarters};
see also~\eqref{eq:boundsonchi(d)}.
%this paragraph provides a bit of context. Should it be deleted or moved into sec 3

Next, we discuss the impact of Theorem~\ref{threequarters} on knot theoretic strategies to approach the $A_n$-realization problem.

In light of the upper bound in~\eqref{eq:knownasymptotic}, an interesting next step in the algebraic $A_n$-realization problem would be to find a constant $c<3/4$ such that
\begin{equation}\label{eq:c}
\liminf_{n\to\infty}\frac{n}{d(n)^2} \leq c.
\end{equation}
Theorem~\ref{threequarters} has a consequence, which is arguably somewhat disappointing:
an approach from knot concordance theory towards finding such a $c$ using a certain type of concordance invariants is doomed to fail. This follows from the following corollary of Theorem~\ref{threequarters} as we explain in detail in Appendix~\ref{A:hownot}.
\begin{corollary}\label{cor:aaislost}
Every real-valued $1$-Lipschitz concordance invariant $I$ with
\newline
$\lim_{m\to\infty}\frac{I(T(2,2m+1))}{g_4(T(2,2m+1))}=1$ satisfies
\[\liminf_{d\to\infty}\frac{I(T(d,d+1))}{g_4(T(d,d+1))}\geq \frac{1}{2}.\]
\end{corollary}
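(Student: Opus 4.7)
The plan is to use Theorem~\ref{threequarters} to produce a cobordism of genus approximately $d^2/8$ between the knots $T(d,d+1)$ and $T(2,2m+1)$ for some $m\approx 3d^2/8$, and then feed this cobordism into the $1$-Lipschitz property of $I$ together with the given asymptotic hypothesis on $I$ for $(2,\mathrm{odd})$-torus knots.

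First I would build a connected, genus-$0$ cobordism $\Sigma_1$ from $T(d,d+1)$ to $T(d,d)$ of Euler characteristic $-(d-1)$. Representing $T(d,d+1)$ as the closure of the positive braid $(\sigma_1\cdots\sigma_{d-1})^{d+1}$ and $T(d,d)$ as the closure of $(\sigma_1\cdots\sigma_{d-1})^{d}$, oriented resolutions (saddle moves) on the $d-1$ crossings in a single extra factor $\sigma_1\cdots\sigma_{d-1}$ yield such a cobordism. Then Theorem~\ref{threequarters} supplies a connected cobordism $\Sigma_2$ from $T(d,d)$ to $T(2,\lfloor 3d^2/4\rfloor)$ with $\chi(\Sigma_2)/d^2\to -1/4$. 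Stacking $\Sigma_2$ on $\Sigma_1$ adds Euler characteristics, and if the target link $T(2,\lfloor 3d^2/4\rfloor)$ has two components, I would perform one additional band move to reach a nearby $T(2,2m+1)$, which is a knot and has $m=3d^2/8+O(1)$. The result is a connected cobordism $\Sigma$ between the knots $T(d,d+1)$ and $T(2,2m+1)$ with $\chi(\Sigma)=-d^2/4+o(d^2)$, hence genus $g(\Sigma)=-\chi(\Sigma)/2=d^2/8+o(d^2)$.

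Applying the $1$-Lipschitz property of $I$ to $\Sigma$ gives
\[I(T(d,d+1))\ \geq\ I(T(2,2m+1))-g(\Sigma).\]
The hypothesis on $I$, combined with $g_4(T(2,2m+1))=m=3d^2/8+O(1)$, yields $I(T(2,2m+1))=3d^2/8+o(d^2)$, and therefore $I(T(d,d+1))\geq d^2/4+o(d^2)$. Dividing by $g_4(T(d,d+1))=d(d-1)/2\sim d^2/2$ and passing to the $\liminf$ gives the desired bound $\geq 1/2$.

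The real content is Theorem~\ref{threequarters}; the remaining ingredients (the cobordism $\Sigma_1$, the parity adjustment on the $T(2,\cdot)$ side, and the Lipschitz/asymptotic estimate) together cost only $O(d)$ Euler characteristic and are therefore negligible when divided by $g_4(T(d,d+1))\sim d^2/2$. So no new obstacle arises beyond what the theorem already resolved.
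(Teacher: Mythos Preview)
Your proof is correct and follows essentially the same approach as the paper's: build a cobordism $T(d,d+1)\to T(d,d)$ of Euler characteristic $-(d-1)$, compose with the cobordism from Theorem~\ref{threequarters}, adjust parity on the $T(2,\cdot)$ side, then apply the $1$-Lipschitz property and the hypothesis on $I$ for $(2,\mathrm{odd})$-torus knots. The paper packages the final estimate as a single chain of (in)equalities starting from $\tfrac14=\lim_{d\to\infty}\frac{-\chi(d)}{d^2}$, but the content is identical to your asymptotic bookkeeping.
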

Examples of such invariants $I$ include many classical and recent knot invariants (when appropriately normalized), for example Trotter's signature $\sigma$, Rasmussen's $s$, Ozsv\'ath and Szab\'o's $\tau$, Ozsv\'ath, Stipsicz, and Szab\'o's $\Upsilon(t)$, and Hom and Wu's~$\nu^+$. We discuss such invariants $I$ and a proof of Corollary~\ref{cor:aaislost} in Section~\ref{sec:conchom}.

However, Theorem~\ref{threequarters} does not destroy all hope of using smooth concordance as an approach towards making progress on the $A_n$-realization problem. For context, we explain this in Appendix~\ref{sec:smoothAnrelproblem}, where we also make explicit a smooth analogue of the $A_n$-realization problem.

\subsection*{Acknowledgements} PF gratefully acknowledges support by the Swiss National Science Foundation
Grant~181199.

\section{Hirano's curves and the proof of Theorem~\ref{threequarters} }\label{sec:main}
We derive Theorem~\ref{threequarters} (and Theorem~\ref{prop:cobdist}) from the following family of examples.
\begin{lemma}\label{lem:chiexact}
For all integers $m\geq 1$, there exists a connected smooth cobordism of Euler characteristic
%$-(d-1)(d-1)+(n-1)N+1=
%-(2n+1)^2+(n-1)3(n+1)+1=
%-n^2-4n-3$
$-m^2-2m$  between the links
$T(2m,2m)\text{ and }
T\left(2,3m^2 \right)$.
\end{lemma}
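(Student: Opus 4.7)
The plan is to combine Hirano's algebraic construction with a modest sequence of smooth saddle attachments. For each $m \geq 1$, I expect Hirano to supply an algebraic plane curve $C_m \subset \C^2$ of degree $2m$ carrying many $A_k$-type singularities whose total Milnor number approaches $3m^2$, the asymptotic ratio on the right-hand side of~\eqref{eq:knownasymptotic}.

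The algebraic part of the cobordism comes from slicing $C_m$: intersecting with a large ball $B_R \subset \C^2$ and removing small Milnor balls around each singular point produces a smooth, connected surface whose outer boundary is the link at infinity $T(2m, 2m)$ and whose inner boundary is the disjoint union of local links $T(2, k_i+1)$, one at each $A_{k_i}$-singularity. Applying the genus-degree formula to the normalization $\widetilde{C}_m$ yields Euler characteristic
\[\chi_{\text{base}} = 4m - 4m^2 + \sum \mu_i - s,\]
where $s$ is the number of singularities and the $\mu_i$ are their Milnor numbers (the $\sum r_i$ branch contributions cancel nicely against $\delta_i = (\mu_i + r_i - 1)/2$).

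Next, a prescribed sequence of saddle bands (each contributing $-1$ to $\chi$) would reshape the disjoint union of local links into the single torus link $T(2, 3m^2)$. Two basic saddle moves suffice: one merging a disjoint pair $T(2,a) \sqcup T(2,b)$ into a torus link of the form $T(2, a+b+c)$ for small $c$ (depending on the band's orientation), and one changing $T(2, n)$ into $T(2, n \pm 1)$ by resolving or introducing a crossing. Connectedness is preserved throughout: the base cobordism is connected since $C_m$ is irreducible, and each successive band is attached within a single, already-connected surface.

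The main obstacle is the arithmetic bookkeeping: verifying that Hirano's specific configuration of singularities on $C_m$ permits a sequence of saddle moves landing precisely at $T(2, 3m^2)$ with total Euler characteristic exactly $-m^2 - 2m$. This is essentially the content of the lemma, and its elegance lies in the near-miraculous match between Hirano's 30-year-old algebraic examples and the smooth asymptotic target $-\tfrac{1}{4}d^2$ predicted by signature considerations.
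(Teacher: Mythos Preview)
Your overall architecture matches the paper's: take Hirano's degree-$2m$ curve (which in fact has exactly $N=3m$ singularities, each of type $A_{m-1}$), slice it between a large sphere and small Milnor spheres, then modify the inner boundary to reach $T(2,3m^2)$. Your Euler-characteristic formula $\chi_{\mathrm{base}}=-4m^2+4m+\sum\mu_i-s$ is also correct.

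There is, however, a genuine gap in the band-attachment step, and it is not just bookkeeping. Plugging Hirano's data ($s=3m$, $\sum\mu_i=3m(m-1)$) into your formula already gives $\chi_{\mathrm{base}}=-m^2-2m$, the exact target. But this surface sits in $W=B_R\setminus\bigsqcup_{k}B_k$, with its inner boundary distributed over $3m$ \emph{distinct} $3$-spheres; it is not yet a cobordism in $S^3\times[0,1]$. Your proposed modifications are all saddle bands contributing $-1$, and at least $3m-1$ of them are needed just to connect the $3m$ local links. That forces the final Euler characteristic to be at most $-m^2-2m-(3m-1)$, strictly below the target. So the toolkit you list cannot land on $-m^2-2m$.

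What the paper does that you are missing: the passage from $W$ to $S^3\times[0,1]$ is carried out by drilling out tubular neighbourhoods of $N-1$ arcs \emph{lying on the surface} and connecting the small spheres. Each such drilling removes a band from the surface, so it \emph{increases} $\chi$ by $1$ while simultaneously turning the disjoint local links into their connected sum $\#_{N}T(2,m)$. After this, $N-1$ ordinary saddle bands take the connected sum to $T(2,Nm)=T(2,3m^2)$. The net change is $+(N-1)-(N-1)=0$, which is exactly why the answer comes out to $-m^2-2m$. Once you insert this tubing-along-the-surface step, your outline becomes the paper's proof.
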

Lemma~\ref{lem:chiexact} follows from the existence of a family of examples of projective algebraic curves due to Hirano:
for every integer $m\geq 2$, there exists an irreducible projective algebraic curve in $\C\P^2$ of degree $2m$ with exactly $N:= 3m$ singularities, all of which are of type~$A_{m-1}$~\cite[Theorem~2]{Hirano_92}; see also~\cite[Theorem~3.2]{GS08}. We explain the details.
\begin{proof}[Proof of Lemma~\ref{lem:chiexact}]
For $m=1$, there even exists a smooth connected cobordism from $T(2,2)$ to $T(2,3)$ with Euler characteristic $-1$.
%(since there exists such a cobordism of Euler characteristic $k-l$ between $T(2,k)$ to $T(2,l)$ for all integers $l\geq k\geq 0$).
Hence we consider the case 
of an integer $m\geq 2$. %, which we fix for the rest of this proof.
We %set $n\coloneqq m-1$ and
consider the projective algebraic curve $C\subset \C\P^2$ of degree $d\coloneqq 2m$ given by the irreducible homogeneous polynomial
\[ F= \left(x^{m}+y^{m}+z^{m}\right)^2-4\left(x^{m}y^{m}+y^{m}z^{m}+z^{m}x^{m}\right)\in\C[x,y,z],\] which was used by Hirano to prove~\cite[Theorem~2]{Hirano_92}. A calculation reveals that $C$ has 
$N:= 3m$ singularities, all of which are of type~$A_{m-1}$.

We obtain an affine algebraic curve in $\C^2$ with $N$-many $A_{m-1}$ singularities by removing a generic projective line from $\C\P^2$. We explain this in more detail in the rest of this paragraph.
Pick a projective line $L$---a subvariety $L\subset \C\P$ defined by $ax+by+cz=0$ for some
$[a:b:c]\in\C\P^2$---that intersects $C$ transversally
(i.e.~if $p\in C\cap L$, then $p$ is a non-singular point of $C$ and the tangent spaces of $C$ and $L$ at $p$ span the entire tangent space of $\C\P^2$ at $p$).
Note that by B\'ezout's theorem $C\cap L$ consists $d$ points. We pick a linear transformation $A\in \mathrm{Gl}(3,\C)$ that maps $L$ to the line at infinity, which is defined by $z=0$ and denoted by $\C\P^1$. Now consider $\widetilde{F}=F\circ A^{-1}$,
which is a homogeneous polynomial defining the curve $\widetilde{C}$ obtained from applying $A$ to $C$,
define $\widetilde{f}\coloneqq \widetilde{F}(x,y,1)$, and take the desired affine algebraic curve in $\C^2=\C\P^2\setminus \C\P^1$ to be $\widetilde{f}^{-1}(0)$. 

The fact that $\widetilde{C}$ intersects $\C\P^1$ transversally,
implies that the link at infinity of $\widetilde{f}^{-1}(0)$ is the torus link $T(d,d)$, i.e.~$S^3_R\cap \widetilde{f}^{-1}(0)$ has link type $T(d,d)$ for $R>0$ large enough.
In order to see this, consider a closed regular neighborhood $\nu(\C\P^1)$ of $\C\P^1\subset\C\P^2$. To be concrete, take $\nu(\C\P^1)$ to be the complement of $\mathrm{int}(B_R)\subset\C^2\subset\C\P^2$ for some large $R>0$. Such a neighborhood is diffeomorphic (via some $\phi$) to the total space $E$ of the once-twisted $D^2$-bundle over $S^2\cong \C\P^1$---the $D^2$-bundle $\pi\colon E\to S^2$ with $ \partial E=S^3$.
Choosing $\nu(\C\P^1)$ smaller if needed (that is increasing $R$), by transversality we can arrange for the diffeomorphism $\phi\colon \nu(\C\P^1)\to E$ to map $\nu(\C\P^1)\cap \widetilde{C}$ to $d$ fibers $\pi^{-1}(p_1),\pi^{-1}(p_2),\cdots,\pi^{-1}(p_d)$ for points $p_1,\cdots,p_d\in S^2$.
Since $\pi|_{\partial E}\colon \partial E\to S^2$ is the Hopf fibration (since there is only one $S^1$-bundle over $S^2$ with total space $S^3$), $T\coloneqq\pi|_{\partial E}^{-1}\{p_1,\cdots,p_d\}\subset \partial E=S^3$ is a $T(d,d)$ torus link.
Recalling $\partial(\nu(\C\P^1))=S^3_R$, we see that $\phi$ yields a diffeomorphism of pairs between 
$(S^3_R,S^3_R\cap \widetilde{f}^{-1}(0))=(S^3_R,\phi^{-1}(T))$ and $(S^3, T)$. Hence, as desired, the link at infinity is the torus link $T(d,d)$.

We now use $\widetilde{f}^{-1}(0)$ to find the desired cobordism. For this
let $s_1,\cdots, s_N$ denote the singular points of $\widetilde{f}^{-1}(0)$ and choose $\epsilon$ such that $S^3_{\epsilon,s_k}$---the sphere of radius epsilon around $s_k$---intersects $\widetilde{f}^{-1}(0)$ transversally in $T(2,m)$. Let $W:= B_R^4\setminus(\bigcup_{1\leq k\leq N}\mathrm{int}(B^4_k))$, where $B^4_R$ denotes the close ball with boundary $S^4_R$ and $\mathrm{int}(B^4_k)$ denotes the open ball with boundary $S^3_{\epsilon,s_k}$. Towards finding our cobordism (a surface $F$ in $S^3\times [0,1]$),
we modify $W$ to be diffeomorphic to $S^3\times [0,1]$ by tubing the small boundary spheres together and track what happens to $\widetilde{f}^{-1}(0)$.
For this, we pick $N-1$ pairwise disjoint embedded closed arcs $a_k\subset W\cap\widetilde{f}^{-1}(0)$ that start and end on
$S\coloneqq\bigcup_{1\leq k\leq N} S^3_{\epsilon,s_k}$
such that $S\cup \bigcup_{1\leq k\leq N-1} a_k$ is connected.
We take $\nu(a_k)$ to be a small closed tubular neighborhood of $a_k$ such that its boundary intersects $\widetilde{f}^{-1}(0)$ transversally and the pair
$(\nu(a_k), \widetilde{f}^{-1}(0)\cap\nu(a_k))$ is diffeomorphic to $(B^3\times[0,1],B^2\times[0,1])$, where $B^3$ denotes the unit ball in $\R^3$ and $B^2{\coloneqq}B^3\cap \R^2\times\{0\}$.
We set $X$ to be the closure in $\C^2$ of $W\setminus\bigcup_{1\leq k\leq N} \nu(a_k)$. After smoothing corners, we pick a diffeomorphism from $X$ to $S^3\times [0,1]$ and let $F\subset  S^3\times [0,1]$ denote the image of $X\cap\widetilde{f}^{-1}(0)$.

Next, we determine the Euler characteristic of $F$ and along the way observe that it is connected.
We discuss the case that $m$ is odd. A similar calculation works when $m$ is even and yields the same result. 
We first note that $C$ (as a topologial surface) is connected, as is the case for all irreducible projective curves in $\C\P^2$, and has genus $(d-1)(d-2)/2-N\frac{m-1}{2}$. The latter can for example be seen by noting that a small generic deformation of $C$ is a smooth algebraic curve of degree $d$, which has genus $(d-1)(d-2)/2$, where each of the $N$-many $A_{m-1}$-singularities contributes $\frac{m-1}{2}$ to the genus. %Note also that $C$ is connected, as is the case for all irreducible projective curves in $\C\P^2$).
$\widetilde{f}^{-1}(0)$ and $\widetilde{f}^{-1}(0)\cap W$ have the same genus as $C$ (since they are obtained by removing discs), and $\widetilde{f}^{-1}(0)\cap X\cong F$ also has the same genus as $C$ and is also connected since it is obtained from $\widetilde{f}^{-1}(0)\cap W$ by removing neighborhoods of embedded arcs that connect different boundary components. Thus, we find
\[\chi(F)=2-2\frac{(d-1)(d-2)-N(m-1)}{2}-(d+1)%= -(d-1)(d-2)+N(m-1)-(d-1)
=-(d-1)(d-1)+N(m-1),\] because $F$ is connected and has $d+1$ boundary components.

By construction, $F$ is a connected smooth cobordism between $T(d,d)$ and a knot $K$ given as the connected sum of $N$-many $T(2,m)$ torus links. Take $F'$ to be a cobordism given by $N-1$ one-handles between $K$ and $T(2,Nm)$; in particular, $F'$ is connected and $\chi(F')=-N+1$.

Composing the two cobordisms $F$ and $F'$, we find a connected smooth cobordism from $T(d,d)$ to $T(2,Nm)$
with Euler characteristic
\[-(d-1)(d-1)+N(m-1)-N+1=-(d-1)(d-1)+(m-2)N+1=-m^2-2m.\qedhere\]
\end{proof}

\begin{proof}[Proof of Theorem~\ref{threequarters}]
We fix an integer $d\geq 2$ and consider the largest integer $m$ such that $d\geq D\coloneqq 2m$. In other words, $D=d$ if $d$ is even and $D=d-1$ if $d$ is odd. Let $F$ be a smooth connected cobordism from $T(d,d)$ to $T(D,D)$ with
\[\chi(F)=(D-1)^2-(d-1)^2=\left\{\begin{array}{cc}0&\text{ if $d$ is even}\\
-4m+1&\text{ if $d$ is odd}\end{array}\right..\]
Let $F'$ be a smooth connected cobordism from $T\left(2,3m^2\right)$ to $T\left(2,\lfloor \frac{3}{4}d^2\rfloor\right)$ with
\[\chi(F')=-3m^2+\lfloor 3d^2/4\rfloor=\left\{\begin{array}{cc}0&\text{ if $d$ is even}\\
-3m&\text{ if $d$ is odd}\end{array}\right..\] %Exact \geq -(8n^3-n^2+14n-2).$
Hence, by composing the following three cobordisms
\begin{itemize}
\item $F$ from $T(d,d)$ to $T(D,D)$,
\item a cobordism $H$ from $T(D,D)$ to $T(2,3m^2)$ as guaranteed to exist by Lemma~\ref{lem:chiexact},
\item and $F'$ from $T(2,3m^2)$ to $T(2,\lfloor \frac{3}{4}d^2\rfloor)$,
\end{itemize} we find a connected cobordism $G$ between $T(d,d)$ and $T(2,\left\lfloor\frac{3}{4}d^2\right\rfloor)$ with
\begin{align*}
\chi(G)&= \chi(F)+\chi(H)+\chi(F')
\\&=\left\{\begin{array}{cc}0-m^2-2m+0=-m^2-2m&\text{ if $d$ is even}\\
-4m+1-m^2-2m-3m=-m^2-9m+1&\text{ if $d$ is odd}\end{array}\right.
\end{align*}
Therefore, we have
\[\chi(d)\geq \left\{\begin{array}{cc}-m^2-2m=-\frac{d^2}{4}-d&\text{ if $d$ is even}\\-m^2-9m+1=-\frac{d^2}{4}-4d+5+\tfrac{1}{4}&\text{ if $d$ is odd}\end{array}\right.\geq-\frac{d^2}{4}-4d.\]

To find an upper bound on $\chi(d)$, we employ Murasugi's signature obstruction on cobordism distance~\cite{Murasugi_OnACertainNumericalInvariant}.
Using
\begin{equation}\label{eq:sigfortorusknots}\sigma(T(d,d))=-\left\lfloor\tfrac{d^2-1}{2}\right\rfloor\text{ and }\sigma\left(T\left(2,k\right)\right)=-k+1\quad\text{\cite[Theorem~5.2]{GLM}},\end{equation}
for all positive integers $d$ and $k$, we find
\[\chi(d)\overset{\text{\cite{Murasugi_OnACertainNumericalInvariant}}}{\leq}
\sigma\left(T\left(2,\left\lfloor\tfrac{3}{4}d^2\right\rfloor\right)\right)-\sigma(T(d,d))\overset{\text{\eqref{eq:sigfortorusknots}}}{=}\left\lfloor\tfrac{d^2-1}{2}\right\rfloor-\left\lfloor\tfrac{3}{4}d^2\right\rfloor+1\leq -\left\lfloor\tfrac{d^2}{4}\right\rfloor+1.\]

In conclusion, we have shown
\begin{equation}\label{eq:boundsonchi(d)}-\frac{d^2}{4}-4d\leq\chi(d)\leq-\left\lfloor\frac{d^2}{4}\right\rfloor+1;\end{equation}
in particular, this establishes $\lim_{d \to \infty} \frac{\chi(d)}{d^2}=-\frac{1}{4}$.
\end{proof}

We end this section with a remark about rearranging curves in $\C^2$ such that their link at infinity is $T(d,d)$ without changing the singularity at the origin, which we have used in the first paragraph of the introduction. The argument is very similar to the one from the second paragraph of the proof of Lemma~\ref{lem:chiexact}.
\begin{Rem}\label{rem:T(d,d)atinfty}
Let $C\subset\C^2$ be a reduced\footnote{reduced simply amounts to the defining polynomial $f$ being square-free} algebraic curve of degree $d$ and fix some $p\in C$. We claim by adding a small degree $d$ polynomial to a defining square-free polynomial $f$ of $C$, we can change $C$ to a reduced algebraic curve $\widetilde{C}$ with the same singularity at $p$ such that the link at infinity of $\widetilde{C}$ is $T(d,d)$.

Indeed, this can be done similarly as argued in the second paragraph of the above proof. 
Considering the closure in $\C\P^2$ (by homogenizing to a 3-variable polynomial $F$ of degree $d$ with $F(x,y,1)=f(x,y)$), chose a generic projective line, and then composing the defining equation $F$ with a linear transformation of $\C\P^2$ that maps this line to the line at infinity $\{[x:y:z]\mid z=1\}$ and fixes the $p$, we find a new defining equation $\widetilde{F}$ such that setting $\widetilde{f}(x,y)=\widetilde{F}(x,y,1)$ yields the defining equation of an algebraic curve $\widetilde{C}$ as desired. To guarantee that the coefficients between $f$ and $\widetilde{f}$ vary little, choose the generic projective line to be given by an equation $ax+by+cz=0$ with $a$ and $b$ close to $0$ and $c$ close to $1$ and choose the linear transformation close to the identity (say as an element in $\mathrm{Gl}_3(\C)$). 

In the second paragraph of the introduction, we further wanted a smooth algebraic curve. For this we note that adding a generic linear polynomial arranges that $\widetilde{f}^{-1}(0)$ is smooth without changing the link at infinity. Choosing the coefficients of said linear polynomial small (compared to an $\epsilon$ with $S^3_{\epsilon'}\pitchfork \widetilde{f}^{-1}(0)$ being a torus link of type $T(2,n+1)$ for all $0<\epsilon'\leq\epsilon$) assures that $\widetilde{f}^{-1}(0) \cap S^3_\epsilon$ remains a torus link of type $T(2,n+1)$.
\end{Rem}  

\section{Cobordism distance between torus links of type $T(d,d)$ and $T(2,N)$}\label{sec:asymdist}
Theorem~\ref{threequarters} and its proof combined with the knot signature obstruction for cobordisms and the resolution of the local Thom conjecture allows to determine the smooth cobordism distance between the link $T(d,d)$ and all torus links  $T(2,N)$ up to an error of at most $3d$. This is the content of Theorem~\ref{prop:cobdist}, which we now prove.

\begin{proof}[Proof of Theorem~\ref{prop:cobdist}]
Without loss of generality, take $d$ to be positive. In fact, we consider the case when $d\geq 2$, since the case $d=1$ is immediate from the local Thom conjecture; compare~\eqref{eq:locthom} below.

We first discuss the case $N\geq0$. 
Using a cobordism between $T(d,d)$ and $T(2,\lfloor \frac{3}{4}d^2\rfloor)$ that realizes $\chi(d)$ and composing it with a connected cobordism between $T(2,\lfloor \frac{3}{4}d^2\rfloor)$ to $T(2,N)$ of Euler characteristic $-\left|\lfloor \frac{3}{4}d^2\rfloor-N\right|$, yields $-\left|\lfloor \frac{3}{4}d^2\rfloor-N\right|+\chi(d)\leq\chi(d,N)$. Combined with
\[-\left|\lfloor \tfrac{3}{4}d^2\rfloor-N\right|-\frac{d^2}{4}-4d
\overset{\text{\eqref{eq:boundsonchi(d)}}}{\leq}-\left|\lfloor \tfrac{3}{4}d^2\rfloor-N\right|+\chi(d),\]
we find the desired lower bound
\[-\frac{d^2}{4}-\left|\frac{3}{4}d^2-N\right|-4d
%=-\left|\lfloor \frac{3}{4}d^2\rfloor-N\right|-\frac{d^2}{4}+O(d)
%\overset{\text{\eqref{eq:boundsonchi(d)}}}{\leq}-\left|\lfloor \frac{3}{4}d^2\rfloor-N\right|+\chi(d)
\leq\chi(d,N).\]

For the upper bound, in case $N\geq \frac{3}{4}d^2$, we recall the signature bound
\[\chi(d,N)\overset{\text{\cite{Murasugi_OnACertainNumericalInvariant}}}{\leq} -\sigma(T(d,d))+\sigma(T(2,N))\overset{\eqref{eq:sigfortorusknots}}{=} \left\lfloor\tfrac{d^2-1}{2}\right\rfloor-N+1{\leq} -\left|-\tfrac{d^2}{2}+N\right|+\tfrac{1}{2}\]
and apply
$\left|-\frac{d^2}{2}+N\right|=\left|\frac{d^2}{4}+(-\frac{3}{4}d^2+N)\right|= \frac{d^2}{4}+\left|-\frac{3}{4}d^2+N\right|$ to find
\[\chi(d,N)\leq -\frac{d^2}{4}-\left|-\frac{3}{4}d^2+N\right|+\frac{1}{2},\] as desired.
If instead, $0\leq N< \frac{3}{4}d^2$, we use the following triangle inequality for the cobordism distance
$\chi(d,N)\leq \chi(d,1) - \chi(1,N)$ in combination with the following consequence of the local Thom conjecture~\cite[Corollary~1.3]{KronheimerMrowka_Gaugetheoryforemb}:
\begin{equation}\label{eq:locthom}
\chi(d,1)=-(|d|-1)^2\text{ and } \chi(1,N)=-\left||N|-1\right|,\end{equation} for all integers $N$ and non-zero integers $d$.
We find
\begin{align*}
\chi(d,N)&\leq-(d-1)^2+(N-1)
\\&= -d^2+N+2d-2
\\&=-\frac{d^2}{4}-\left|\frac{3}{4}d^2-N\right|+2d-2,\end{align*} where we combined the triangle inequality and~\eqref{eq:locthom} to see the inequality.
%For $N=0$, we have $\chi(d,0)\leq\chi(d,1)+\chi(1,0)=-(d-1)^2-1=-|d|^2-2d-2$.

Finally, if $N\leq -1$, then
\[\chi(d,N)= \chi(d,1)+\chi(1,N)\overset{\text{\eqref{eq:locthom}}}{=}-(d-1)^2+N+1%=-d^2+N+2d
=-\tfrac{d^2}{4}-\left|\tfrac{3}{4}d^2-N\right|+2d,\] where the first equality is a consequence of the local Thom conjecture~\cite[Corollary~1.3]{KronheimerMrowka_Gaugetheoryforemb}.
\end{proof}

\section{$1$-Lipschitz concordance invariants}\label{sec:conchom}
We call a real-valued knot invariant $I\colon \mathfrak{K}nots\to \R$ a \emph{1-Lipschitz concordance invariant} if $|I(K)-I(J)|\leq g_4(J\# -K)$ for all $K, J\in \mathfrak{K}nots$, where
$\mathfrak{K}nots$ denotes the set of isotopy classes of knots and $-K$ denotes the reverse of the mirror of $K$.
%The name is justified by the observation that such $I$ are constant on concordance classes; in other words, such $I$ factor as maps through the smooth concordance group. 

Most classically, Trotter's signature $-\sigma/2$
is an example~\cite{Trotter_62_HomologywithApptoKnotTheory,Murasugi_OnACertainNumericalInvariant},
but also Ozsv\'ath and Szab\'o's $\tau$~\cite{OzsvathSzabo_03_KFHandthefourballgenus} and $-s/2$~\cite{rasmussen_sInv} (and more generally all slice-torus invariants), and Ozsv\'ath, Stipsicz, and Szab\'o's $-\Upsilon(t)/t$~\cite{OSS_2014}. All of these are also additive under connected sum. A none-additive  example is Hom and Wu's $\nu^+$~\cite{Hom_2016}.
These examples of $1$-Lipschitz concordances invariants satisfy $|I(T_{2,2m+1})|=m$ for $m\in\N$. For such $I$, as a consequence of Theorem~\ref{threequarters}, we find Corollary~\ref{cor:aaislost}, which can be paraphrased to say that $|I(T(d,d+1))|$ is at least half of the genus of $T(d,d+1)$ asymptotically for large $d$. This might be of independent interest, but for us this is actually a negative result since it shows that a certain approach towards making progress on the $A_n$-realization problem can not work; see Appendix~\ref{A:hownot}, where we make this statement precise.

\begin{proof}[Proof of Corollary~\ref{cor:aaislost}]
Fix a positive integer $d$ and write $N\coloneqq \left\lfloor\frac{3}{4}d^2\right\rfloor$. By composing a connected cobordism of Euler characteristic $\chi(d)$ between $T(d,d)$ and $T(2,N)$ with a connected cobordism of Euler characteristic $1-d$ between $T(d,d+1)$ and $T(d,d)$, we find a connected cobordism of Euler characteristic $\chi(d)-(d-1)$ between $T(d,d+1)$ and $T(2,N)$. If $N$ is odd, we take $F$ to be this cobordism and write $K=T(2,N)$, if not we take $F$ to be a connected cobordism between $T(d,d+1)$ and $T(2,N+1)$ of Euler characteristic $\chi(d)-d$ and write $K=T(2,N+1)$. In both cases, $F$ has genus
$\lceil\frac{-\chi(d)+(d-1)}{2}\rceil$; hence, $g_4(T(d,d+1)\# -K\leq \lceil\frac{-\chi(d)+(d-1)}{2}\rceil$. We complete the proof by the following calculation, which uses Theorem~\ref{threequarters} for the first equality and the assumption $\lim_{m\to\infty} \frac{I(T(2,2m+1))}{g_4(T(2,2m+1)}=1$ for  the last equality:
\begin{align*}\frac{1}{4}
&=\lim_{d\to\infty}
\frac{-\chi(d)}{d^2}
=\lim_{d\to\infty}
\frac{\lceil\frac{-\chi(d)+(d-1)}{2}\rceil}
{d^2/2}
=\lim_{d\to\infty}
\frac{\lceil\frac{-\chi(d)+(d-1)}{2}\rceil}
{g_4(T(d,d+1))}
\\&=\lim_{d\to\infty}
\frac{\lceil\frac{-\chi(d)+(d-1)}{2}\rceil}
{g_4(T(d,d+1))}
=\liminf_{d\to\infty}
\frac{\lceil\frac{-\chi(d)+(d-1)}{2}\rceil}
{g_4(T(d,d+1))}\\
&\geq \liminf_{d\to\infty}\frac{I(K)-I(T(d,d+1))}{g_4(T(d,d+1))}\\
&\geq \liminf_{d\to\infty}\frac{I(K)}{g_4(T(d,d+1))}-\liminf_{d\to\infty}\frac{I(T(d,d+1))}{g_4(T(d,d+1))}\\
%&= \liminf_{d\to\infty}\frac{I(K)}{g_4(T(d,d+1))}-\liminf_{d\to\infty}\frac{I(T(d,d+1))}{g_4(T(d,d+1))}\\
&= \liminf_{d\to\infty}\frac{I(K)}{d^2/2}-\liminf_{d\to\infty}\frac{I(T(d,d+1))}{g_4(T(d,d+1))}\\
&= \frac{3}{4}-\liminf_{d\to\infty}\frac{I(T(d,d+1))}{g_4(T(d,d+1))}.\qedhere
\end{align*}
%hence, $\liminf_{d\to\infty}\frac{I(T(d,d+1))}{g_4(T(d,d+1))}\geq\frac{1}{2}.$
\end{proof}

\appendix
\section{Context: a smooth analogue of the $A_n$-realization problem and limitations of Theorem~\ref{threequarters}}\label{sec:smoothAnrelproblem}
%We want to note that Theorem~\ref{threequarters} does not destroy all hope of using smooth concordance as an approach towards making progress on the $A_n$-realization problem. To explain this,
Let us be exact in determining the Euler characteristic of the cobordism provided by $\widetilde{f}$ between $T(d,d)$ and $T(2,n+1)$ from the second paragraph of the introduction, which we earlier found to be around $n-d^2$. Its Euler characteristic is $n-(d-1)^2$, as we explain in the rest of this paragraph. Take $C\subset \C\P^2$ to be the closure of $\widetilde{f}^{-1}(0)$, i.e.~the projective algebraic curve given by the homogenization of $\widetilde{f}$. We note that $C$ is a smooth curve (this follows, since all points of $C$ in $\C^2\subseteq \C\P^2$ are non-singular by the choice of $\widetilde{f}$ and from the fact that the link at infinity is $T(d,d)$ we find that $C$ has $d$ non-singular points on $\C\P^1\coloneqq \C\P^2\setminus \C^2$) of degree $d$; hence, it is a closed genus $(d-1)(d-2)/2$ surface and thus $\widetilde{f}^{-1}(0)$ is a $d$-times punctured genus $(d-1)(d-2)/2$ surface. The link $T(2,n+1)$ separates $\widetilde{f}^{-1}(0)$ into two pieces, one of which (the bounded one) is diffeomorphic to the Milnor fiber $F$ of the $A_n$-singularity, i.e.~a connected surface with first Betti number (aka its Milnor number) equal to $n$. Hence, the Euler characteristic of the cobordism is
\[\chi\left(\widetilde{f}^{-1}(0)\right)-\chi(F)=(-(d-1)(d-2)+2-d)-(-n+1)=n-(d-1)^2.\]

Motivated by the above calculation, we let $d_{\mathrm{sm}}(n)$ denote the smallest integer such that there exists a connected smooth cobordism of Euler characteristic $n-(d_{\mathrm{sm}}(n)-1)^2$ between $T(d_{\mathrm{sm}}(n),d_{\mathrm{sm}}(n))$ and $T(2,n+1)$. Equivalently, invoking the resolution of the local Thom conjecture, $d_{\mathrm{sm}}(n)$ is the smallest integer among the positive integers $d$ such that there exists a $\chi$-maximizing smooth connected cobordism $C\subset S^3\times [-1,1]$ between $T(d,d)$ and the unknot $U$ with $S^3\times \{0\}\pitchfork C=T(2,n+1)$. In other words, $d_{\mathrm{sm}}(n)$ is the smallest integer among the $d$ with
\[d_\mathrm{cob}(T(d,d), U)=d_\mathrm{cob}(T(d,d), T(2,n+1))+d_\mathrm{cob}(T(2,n+1), U),\] where 
$d_\mathrm{cob}$ denotes the cobordism distance between links; see also~\cite[Obs.~5]{Feller_15_MinCobBetweenTorusknots}.

The problem of determining $d_{\mathrm{sm}}(n)$ can be understood as a smooth analogue of the $A_n$-realization problem. Certainly, by the above calculation, one has $d_{\mathrm{sm}}(n)\leq d(n)$, but it is even conceivable that the following question has a positive answer:
Is $d_{\mathrm{sm}}(n)=d(n)$ for all $n\in \N$?
This question appears to be folklore among a some knot theorists, but no answer is in sight. In any case, since $d_{\mathrm{sm}}(n)\leq d(n)$, every constant $c$ with
\begin{equation}\label{eq:csmooth}
\liminf_{n\to\infty}\frac{n}{d_{\mathrm{sm}}(n)^2} \leq c,
\end{equation}
also satisfies~\eqref{eq:c}. Therefore, a positive answer to the following smooth concordance question would constitute progress on the algebraic $A_n$-realization problem.
\begin{question}\label{q:assmoothAn}
Does there exists a constant $c<\frac{3}{4}$ that satisfies
\[\liminf_{n\to\infty}\frac{n}{d_{\mathrm{sm}}(n)^2} \leq c?\]
\end{question}

While we suspect that the answer is no, in fact, we suspect $\limsup_{n\to\infty}\frac{n}{d_{\mathrm{sm}}(n)^2}=\frac{3}{4}$, we do not know. In particular, we note that Theorem~\ref{threequarters} and its proof do \emph{not} directly provide insight into Question~\ref{q:assmoothAn} since the cobordisms between $T(d,d)$ and $T(2,\lfloor\frac{3d^2}{4}\rfloor)$ we use have Euler characteristic strictly less than
$\lfloor\frac{3d^2}{4}\rfloor-(d-1)^2$. However, Theorem~\ref{threequarters} does show that certain asymptotic values of certain knot invariants cannot be used to answer Question~\ref{q:assmoothAn}. We explain the latter in the next Appendix.

For context, we also note that the for $d_{\mathrm{sm}}(n)$ in place of $d(n)$ the upper bounds from~\eqref{eq:knownasymptotic} also hold, while the lower bound is in fact better; see~\cite[Theorem~3.13]{Orevkov_12_SomeExamples}:
 \begin{align}\label{eq:knownasymptoticsmooth}
\frac{2}{3}
\leq\liminf_{n\to\infty}\frac{n}{d_{\mathrm{sm}}(n)^2}
\leq\limsup_{n\to\infty}\frac{n}{d_{\mathrm{sm}}(n)^2} \leq \frac{3}{4}
.
\end{align}
\section{How not to resolve the $A_n$-realization problem}\label{A:hownot}

One may wonder what kind of invariants could help to answer Question~\ref{q:assmoothAn}.

The following observation provides upper bounds on the asymptotic value of $\frac{n}{d_{\mathrm{sm}}(n)}$ (and hence $\frac{n}{d(n)^2}$).

\begin{obs}\label{obs}
Let $I\colon\mathfrak{K}nots\to \R$ be a 1-Lipschitz concordance invariant with
$\lim_{m\to\infty}\frac{I(T(2,2m+1))}{g_4(T(2,2m+1))}=1$.
Setting
\[c'{\coloneqq} \liminf_{d\to\infty}\frac{I(T(d,d+1))}{g_4(T(d,d+1))}\text{ and }
%\]\[
c''{\coloneqq} \limsup_{d\to\infty}\frac{I(T(d,d+1))}{g_4(T(d,d+1))},\]
we find
\[\liminf_{n\to\infty}\frac{n}{d_{\mathrm{sm}}(n)^2}\leq \frac{1+c'}{2}\text{ and }\limsup_{n\to\infty}\frac{n}{d_{\mathrm{sm}}(n)^2}\leq \frac{1+c''}{2}.\]
\end{obs}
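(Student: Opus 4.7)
My plan is to turn the definition of $d_{\mathrm{sm}}(n)$ into a knot cobordism between torus knots of types $T(d,d+1)$ and $T(2,\cdot)$ and apply the 1-Lipschitz property of $I$. Fix $n$ and set $d\coloneqq d_{\mathrm{sm}}(n)$. By definition there is a smooth connected cobordism $W$ from $T(d,d)$ to $T(2,n+1)$ of Euler characteristic $n-(d-1)^2$. Concatenating $W$ with the genus-zero cobordism from $T(d,d+1)$ to $T(d,d)$ of Euler characteristic $1-d$ used in the proof of Corollary~\ref{cor:aaislost}, and, when $n$ is odd, a further saddle $T(2,n+1)\to T(2,n)$ of Euler characteristic $-1$, yields a smooth connected knot cobordism $F$ from $T(d,d+1)$ to $K\coloneqq T(2,2\lfloor n/2\rfloor+1)$ of genus $g=(d^2-d-n)/2+O(1)$. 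Applying the 1-Lipschitz property to $F$ gives $I(K)-I(T(d,d+1))\leq g$; writing $\alpha_d\coloneqq I(T(d,d+1))/g_4(T(d,d+1))$ (so that $c'=\liminf_d\alpha_d$ and $c''=\limsup_d\alpha_d$), using $g_4(T(d,d+1))=d(d-1)/2$, and replacing $I(K)$ by $n/2+o(n)$ via the hypothesis $I(K)/g_4(K)\to 1$, this rearranges to the key estimate
\[\frac{n}{d_{\mathrm{sm}}(n)^2}\leq\frac{1+\alpha_{d_{\mathrm{sm}}(n)}}{2}+o(1)\qquad(n\to\infty).\]

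For the $\limsup$ bound this is immediate: since the sequence $\{d_{\mathrm{sm}}(n)\}_n$ diverges to infinity, $\limsup_n\alpha_{d_{\mathrm{sm}}(n)}\leq\limsup_d\alpha_d=c''$, so the key estimate yields $\limsup_n n/d_{\mathrm{sm}}(n)^2\leq (1+c'')/2$.

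For the $\liminf$ bound the analogous step only produces $\liminf_n\alpha_{d_{\mathrm{sm}}(n)}\geq c'$, which is the wrong direction. My plan is instead to realize $c'$ as a subsequential limit along the image of $d_{\mathrm{sm}}$: pick $d_k\to\infty$ in this image with $\alpha_{d_k}\to c'$, set $n_k\coloneqq\min\{n:d_{\mathrm{sm}}(n)=d_k\}$, and apply the key estimate to obtain $n_k/d_{\mathrm{sm}}(n_k)^2=n_k/d_k^2\leq (1+c')/2+o(1)$, whence $\liminf_n n/d_{\mathrm{sm}}(n)^2\leq (1+c')/2$. The main obstacle is producing such a subsequence, since the image of $d_{\mathrm{sm}}$ need not a priori contain every integer. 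I would handle this by showing that $\alpha_d$ varies slowly, in the sense $|\alpha_{d+1}-\alpha_d|=O(1/d)$—this follows from applying the 1-Lipschitz property to a genus $O(d)$ cobordism between $T(d,d+1)$ and $T(d+1,d+2)$—so that the image of the non-decreasing function $d_{\mathrm{sm}}$, whose gaps have size negligible compared to $d$ by the growth rate $d_{\mathrm{sm}}(n)\sim\sqrt{cn}$ with $c\in[4/3,3/2]$ from~\eqref{eq:knownasymptoticsmooth}, meets a neighborhood of every sufficiently large $d$ tight enough that slow variation allows the substitution without disturbing the limit $\alpha_{d_k}\to c'$.
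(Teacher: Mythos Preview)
Your proposal is correct and follows essentially the same route as the paper: build the cobordism from $T(d,d+1)$ to a $T(2,\cdot)$ knot out of the defining cobordism for $d_{\mathrm{sm}}(n)$, apply the $1$-Lipschitz bound to get the key inequality, and then identify $\liminf_d\alpha_d$ with $\liminf_n\alpha_{d_{\mathrm{sm}}(n)}$ via the slow variation $|\alpha_{d}-\alpha_{d'}|=O(|d-d'|/d)$ together with the growth estimate~\eqref{eq:knownasymptoticsmooth} on the image of $d_{\mathrm{sm}}$. The only cosmetic difference is that you separate out the $\limsup$ case as immediate (since $\limsup_n\alpha_{d_{\mathrm{sm}}(n)}\le c''$ needs no density argument), whereas the paper treats the $\liminf$ case as representative; your explicit observation that $d_{\mathrm{sm}}$ is non-decreasing is a detail the paper leaves implicit.
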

We note that in the assumption of Observation~\ref{obs} and similarly in Corollary~\ref{cor:aaislost}, the limit could be replaced with $\liminf$ since $\frac{I(K)}{g_4(K)}\leq 1$.

At first sight Observation~\ref{obs} looks like a promising approach towards answering Question~\ref{q:assmoothAn}. For example, the upper bound in~\eqref{eq:knownasymptotic} and~\eqref{eq:knownasymptoticsmooth} immediately follows using $I=-\sigma/2$ since $\lim_{d\to\infty}\frac{-\sigma(T(d,d+1))/2}{g_4(T(d,d+1))}=\frac{1}{2}$. In fact, this upper bound via the signature and Observation~\ref{obs} is essentially how the upper bound via the signature spectrum (mentioned in the first paragraph of the introduction) works.

The bad news is that, by Corollary~\ref{cor:aaislost},
for every $1$-Lipschitz concordance invariant $I\colon\mathfrak{K}nots\to \R$ with
$\lim_{m\to\infty} \frac{I(T(2,2m+1))}{g_4(T(2,2m+1))}=1$,
we have
\[\liminf_{d\to\infty}I(T_{d,d+1})/g_4(T(d,d+1))\geq \frac{1}{2}.\] 
This means, there is no $I$ that can be plugged into Observation~\ref{obs} to improve the upper bound of $\frac{3}{4}$ on any of the quantities
\[\liminf_{n\to\infty}\frac{n}{d(n)^2}, \liminf_{n\to\infty}\frac{n}{d_{\mathrm{sm}}(n)^2}, \limsup_{n\to\infty}\frac{n}{d(n)^2}, \text{and }\limsup_{n\to\infty}\frac{n}{d_{\mathrm{sm}}(n)^2}.\]

It remains to prove Observation~\ref{obs}.

\begin{proof}[Proof of Observation~\ref{obs}]
We discuss only the inequality involving $\liminf$, the other follows by a similar argument.
Fix integers $n,d>0$, where we take $n$ to be even. Assume that there exists a connected cobordism of Euler characteristic $n-(d-1)^2$ between $T(d,d)$ and $T(2,n+1)$.
Then there exists a connected cobordism of Euler characteristic $n-(d-1)d$ between  $T(d,d+1)$ and $T(2,n+1)$. This cobordism has genus $\frac{(d-1)d-n}{2}$; hence,
$\frac{(d-1)d}{2}-\frac{n}{2}\geq -I(T(d,d+1))+I(T(2,n+1))%=-I(T(d,d+1)+\frac{n}{2}
,$ and we have
\begin{equation}\label{eq:boundI/g4} 1+\frac{I(T(d,d+1))}{g_4(T(d,d+1))}\geq \frac{\frac{n}{2}+I(T(2,n+1))}{\frac{(d-1)d}{2}}=\frac{n+2I(T(2,n+1))}{{(d-1)d}}.\end{equation}

Taking $\liminf$, we find
\begin{align*}1+c'
&\overset{\text{\phantom{\eqref{eq:boundI/g4}}}}{=}1+\liminf_{d\to\infty}\frac{I(T(d,d+1))}{g_4(T(d,d+1))}
&\overset{\text{\phantom{\eqref{eq:boundI/g4}}}}{=} & \;1+\liminf_{n\to\infty\mid n\text{ odd}}\frac{I(T(d_{\mathrm{sm}}(n),d_{\mathrm{sm}}(n)+1))}{g_4(T(d_{\mathrm{sm}}(n),d_{\mathrm{sm}}(n)+1))}
\\
&\overset{\text{\eqref{eq:boundI/g4}}}{\geq} \liminf_{n\to\infty\mid n\text{ odd}}\frac{n+2I(T(2,n+1)}{(d_{\mathrm{sm}}(n)-1)d_{\mathrm{sm}}(n)}
&\overset{\text{\phantom{\eqref{eq:boundI/g4}}}}{=}&\; \liminf_{n\to\infty}\frac{n+2I(T(2,n+1))}{(d_{\mathrm{sm}}(n)-1)d_{\mathrm{sm}}(n)}
\\
&\overset{\text{\phantom{\eqref{eq:boundI/g4}}}}{=} \liminf_{n\to\infty}\frac{n+2I(T(2,n+1))}{d_{\mathrm{sm}}(n)^2}
&\overset{\text{\phantom{\eqref{eq:boundI/g4}}}}{=}&\; \liminf_{n\to\infty}\frac{2n}{d_{\mathrm{sm}}(n)^2},
\end{align*} which completes the proof. We comment on why the equalities hold.
The first one is by definition of $c'$.
For the second one, $\leq$ is clear, but not needed. We argue for $\geq$.
By~\eqref{eq:knownasymptoticsmooth} we know that for every large $d$ there exists an even $n$ with $|d_\mathrm{sm}(n)-d|\leq 2\sqrt{d}$. Picking  $d'=d_\mathrm{sm}(n)$ for some such $n$,
we have
\[|I(T(d,d+1))-I(T(d,d+1))|, |g_4(T(d,d+1))-g_4(T(d,d+1))|\leq O((d-d')^2)\leq O(d),\]
and, since $g_4(T(d,d+1))$ grows quadratically in $d$, $\leq$ (in fact $=$) follows. 
The third to last equality follows by a similar argument using that every $d_{\mathrm{sm}}(n)$ for $n$ odd is linearly (in $d_{\mathrm{sm}}(n)$) close to $d_{\mathrm{sm}}(n\pm 1)$. The second to last equality is clear since the two denominators are only $d_{\mathrm{sm}}(n)$ apart but both grow quadratically. Finally, the last equation follows from
$\lim_{m\to\infty}\frac{I(T(2,2m+1))}{g_4(T(2,2m+1))}=1$.
\end{proof}

%, which is equal to the maximal Euler characteristic
%among smooth connected cobordisms between  $T(d,d)$ and $T(2,n+1)$ since $\chi_4(T(d,d))= the resolution of the local Thom conjecture.

\bibliographystyle{alpha}
%{abbrv}
\bibliography{peterbib}

\end{document}